\renewcommand\AA{\mathbb{A}}
\newcommand\QQ{\mathbb{Q}}
\newcommand\NN{\mathbb{N}}
\newcommand\RR{\mathbb{R}}
\newcommand\ZZ{\mathbb{Z}}
\newcommand\PP{\mathbb{P}}
\newcommand\x{\mathbf{x}}
\DeclareMathOperator{\Diag}{Diag}
\DeclareMathOperator{\Pic}{Pic}
\DeclareMathOperator{\rank}{rank}
\renewcommand{\leq}{\leqslant}
\renewcommand{\geq}{\geqslant}
\newcommand\ve{\varepsilon}
\renewcommand\phi{\varphi}
\newcommand\al{\alpha}
\newcommand\be{\beta}
\newcommand{\ring}{\mathfrak{o}_K}
\newcommand{\fa}{\mathfrak{a}}
\newcommand{\fn}{\mathfrak{n}}
\newcommand{\fp}{\mathfrak{p}}
\newtheorem*{thm*}{Theorem}
\newtheorem{lemma}{Lemma}
\theoremstyle{definition}
\newtheorem*{ack}{Acknowledgements}
\newcommand{\A}{\mathcal{A}}
\newcommand\cQ{{\overline{\mathbb{Q}}}}
\begin{document}

\title{Linear growth for Ch\^atelet surfaces}

\author{T.D. Browning}
\address{School of Mathematics\\
University of Bristol\\ Bristol\\ BS8 1TW}
\email{t.d.browning@bristol.ac.uk}

\date{\today}

\begin{abstract}
An upper bound of the expected order of magnitude is established for
the number of $\QQ$-rational points of bounded height on Ch\^atelet
surfaces defined over $\QQ$.
\end{abstract}

\subjclass{11D45 (14G05)}

\maketitle

\section{Introduction}

A Ch\^{a}telet surface $X$ over $\QQ$ is a proper smooth model of 
an affine surface in $\AA^3$ of the form
\begin{equation}\label{eq:chat}
y^2-az^2=f(x), 
\end{equation}
where $a\in \ZZ$ is not a square and $f\in \ZZ[x]$ is a
polynomial without repeated roots and degree 
$3$ or $4$.  In the birational classification of rational surfaces
summarised by Iskovskikh \cite{isk},  
Ch\^atelet surfaces appear as some of the simplest non-trivial surfaces. They 
are conic bundle surfaces of degree $4$, being
equipped with a dominant morphism 
$$
\pi:X \rightarrow \PP^1,
$$ 
all
of whose geometric fibres are conics.  If  $-K_X$ denotes the 
anticanonical divisor, then the linear
system $|-K_X|$ has no base point and gives a morphism
$\psi:X\rightarrow \PP^4$ whose image 
is a singular del Pezzo surface of degree $4$.

Writing $H=H_4\circ
\psi$, where $H_4:\PP^4(\QQ)\rightarrow \RR_{>0}$ is the exponential
height metrized by an arbitrary choice of norm, the primary goal of
this paper is to study the asymptotic behaviour of 
$$
N(B)=\#\{x \in X(\QQ): H(x)\leq B\},
$$
as $B\rightarrow \infty$. We will assume that $X(\QQ)\neq \emptyset$
for all of the Ch\^atelet surfaces under consideration here. The
problem of determining when $X(\QQ)\neq \emptyset$
is completely handled by the 
work of Colliot-Th\'el\`ene, Sansuc and Swinnerton-Dyer \cite{ct1,ct2}.
Our investigation of the counting function $N(B)$ is guided by a well-known conjecture of Manin
\cite{f-m-t},  which predicts the existence of
a constant $c_X> 0$ such that
$
N(B)\sim c_X B (\log B)^{\rho_X-1},
$
as $B\rightarrow \infty$, 
where $\rho_X$ is the rank of the Picard group of $X$.
  With this in mind, the following is our main result.

\begin{thm*}
Let $X$ be a Ch\^{a}telet surface defined over $\QQ$, arising as a proper smooth model of 
the affine surface \eqref{eq:chat}.  Assume that $a<0.$
Then we have 
$$
N(B)=O(B(\log B)^{\rho_X-1}),
$$
where $\rho_X$ is the rank of the Picard group of $X$.
\end{thm*}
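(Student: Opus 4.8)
The plan is to exploit the conic bundle structure $\pi : X \to \Pone$ together with the fact that, because $a<0$, the fibral form $y^2-az^2 = y^2+|a|z^2$ is positive definite. This is decisive: it confines the fibre coordinates to a bounded box and is precisely what distinguishes the tractable case $a<0$ from the indefinite case $a>0$. First I would fix the affine chart and, assuming $\deg f=4$ (the case $\deg f=3$ being recovered by homogenising to a binary quartic with vanishing leading coefficient and treating the fibre over $\infty$ separately), homogenise $f$ to a binary quartic $F(u,v)\in\ZZ[u,v]$, so that $f(u/v)=F(u,v)/v^4$. A point of $X(\QQ)$ over $x=[u:v]$ with $\gcd(u,v)=1$ then corresponds, after clearing denominators, to an integral solution of
\[
Y^2-aZ^2 = F(u,v),
\]
and positivity of the left-hand side forces $F(u,v)\ge 0$, so fibres over $x$ with $f(x)<0$ contribute nothing. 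The finitely many rational points in the degenerate fibres over the roots of $f$ (where $Y=Z=0$) and the contribution of the singular points of the del Pezzo model are negligible and set aside.

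The next step is a height comparison: one shows $H(P)\asymp \max(|u|,|v|)^2$ on the relevant locus, the point being that $Y^2+|a|Z^2 = F(u,v)\ll \max(|u|,|v|)^4$ already forces $|Y|,|Z|\ll\max(|u|,|v|)^2$, so no independent constraint in the fibre direction survives. Hence $H(P)\le B$ becomes $\max(|u|,|v|)\ll B^{1/2}$, and writing $r(n)=\#\{(Y,Z)\in\ZZ^2:Y^2-aZ^2=n\}$ I obtain
\[
N(B)\ll \sum_{\substack{\gcd(u,v)=1\\ \max(|u|,|v|)\ll B^{1/2}}} r\bigl(F(u,v)\bigr).
\]
Since $Y^2-aZ^2$ is the principal form of the imaginary quadratic discriminant attached to $\QQ(\sqrt a)$, the number $r(n)$ is bounded by the number of integral ideals of norm $n$, so $r(n)\le w\sum_{d\mid n}\chi(d)$ for the quadratic character $\chi$ of $\QQ(\sqrt a)$, the right-hand side being non-negative. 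This reduces the problem to estimating $\sum_{u,v}\sum_{d\mid F(u,v)}\chi(d)$.

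I would then interchange the order of summation and, for each $d$, count $(u,v)$ in the box $\max(|u|,|v|)\ll B^{1/2}$ with $d\mid F(u,v)$. The number of residues $(u,v)\bmod d$ with $F(u,v)\equiv 0$ is a multiplicative function $\varrho(d)$, and the lattice-point count is $\asymp B\,\varrho(d)/d^2$ with a controlled error. Summing the main terms yields
\[
N(B)\ll B\sum_{d}\frac{\chi(d)\varrho(d)}{d^2} + (\text{error}),
\]
and the Dirichlet series $\sum_d \chi(d)\varrho(d)d^{-s}$ is an Euler product whose edge behaviour at $s=2$ is governed by the mean value of $\chi(p)$ times the number of roots of $F$ in $\Pone(\mathbb{F}_p)$. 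By Landau's method this produces a factor $(\log B)^{m}$, where $m$ is this analytic mean. The final geometric ingredient is the identity $m=\rho_X-1$: the roots of $F$ index the degenerate fibres of $\pi$, and the weighting by $\chi$ records their splitting over $\QQ(\sqrt a)$, which is exactly the Galois data computing the rank of $\Pic X$, matching the exponent predicted by Manin's conjecture.

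The main obstacle is the uniform control of the error terms in the lattice-point counts and the contribution of the large moduli $d$ (up to $\max F\ll B^2$), which must be shown to contribute no more than $B(\log B)^{\rho_X-1}$. Because only an upper bound is sought, I can avoid the sharp level-of-distribution input that an asymptotic would demand and instead deploy a sieve or Rankin-type argument, bounding $\varrho$ crudely at the ramified and bad primes where $F$ has high-multiplicity reductions, to majorise the divisor sum by the correct power of $\log B$. Establishing this majorisation uniformly in the shape of $F$, with the exponent pinned exactly at $\rho_X-1$, is where the substance of the proof lies.
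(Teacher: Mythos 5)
Your overall strategy --- exploit the conic bundle, use definiteness of $y^2+|a|z^2$, detect solvable fibres with the quadratic character attached to $\QQ(\sqrt{a})$, and extract the logarithm power from an Euler product governed by the splitting of the degenerate fibres --- is the right one, and the last two ingredients do appear in the paper. But there is a genuine error at the reduction step which invalidates what follows. A rational point of $X$ lying over $[u:v]$ has fibre coordinates $y,z\in\QQ$ with some denominator $t\geq 1$; clearing denominators gives $Y^2-aZ^2=t^2F(u,v)$, not $Y^2-aZ^2=F(u,v)$. Your version counts only the points with $t=1$. Correspondingly, the claimed height comparison $H(P)\asymp\max(|u|,|v|)^2$ cannot hold: when the fibre $\pi^{-1}([u:v])$ is isotropic it is a copy of $\PP^1$ containing infinitely many rational points, whose heights are unbounded by Northcott. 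In the anticanonical embedding the height of $(y,z,t;u,v)$ is comparable to $\max(v^2|t|,|y|,|z|)$ (for $|u|\leq|v|$), so $t$ ranges up to $B/v^2$, and each isotropic fibre contributes on the order of $B/v^2$ points of height at most $B$; for $v=O(1)$ that is $\asymp B$ points from a single fibre, against the $O_\ve(B^{\ve})$ representations counted by $r(F(u,v))$. Hence $\sum_{u,v}r(F(u,v))$ is not an upper bound for $N(B)$: it drastically undercounts, and the whole subsequent divisor-sum analysis is estimating the wrong quantity.

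The paper repairs exactly this point. It keeps the third coordinate and must bound, for each $(u,v)$, the number of rational points on the ternary conic $y^2-az^2=t^2F(u,v)$ lying in the lopsided box $|t|\leq B/v^2$, $|y|,|z|\leq B$, uniformly in $(u,v)$; this is done with the Browning--Heath-Brown uniform bound for conics, giving $\ll 2^{\omega(F(u,v))}(1+B/v^2)$ per fibre. The factor $2^{\omega(F(u,v))}$ is then tamed by multiplying by the isotropy indicator $\prod_{p\| F(u,v)}\frac{1}{2}\bigl(1+\bigl(\frac{a}{p}\bigr)\bigr)$ --- your character idea, implemented at the primes exactly dividing $F(u,v)$ --- and averaging over $(u,v)$ via a theorem of la Bret\`eche and Browning on sums of multiplicative functions over values of binary forms; the resulting Euler product is pinned down by Hecke $L$-functions and Bauer's theorem. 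Note also that the bookkeeping of logarithms then differs from yours: the Euler product contributes $(\log B)^{\rho_X-2}$ (since $\rho_X=2+\#\{i:\sqrt{a}\in\QQ_{f_i}\}$), and the final power of $\log B$ comes from summing the fibre count $B/v^2$ over the $\asymp v$ admissible values of $u$, i.e.\ from $\sum_v 1/v$. So even the target exponent for your character and divisor sum should be $\rho_X-2$, not $\rho_X-1$; without the $B/v^2$ weight your argument has no source for the remaining logarithm.
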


Here, as throughout our work, the implied constant is allowed to
depend upon the surface.
Although we will not present any details, it transpires that similar,
but more intricate, arguments also permit one to handle the case $a>0$
in the theorem.

Let 
$$
\be_X=\lim_{B\rightarrow \infty} \frac{\log N(B)}{\log B}
$$ 
be the
growth rate of $X(\QQ)$. As a crude corollary of our theorem it
follows that $\be_X\leq 1$ for Ch\^atelet surfaces.
The question of obtaining lower bounds has recently been addressed by Iwaniec and Munshi
\cite{munshi}, with an analysis of the case in which  $f$ is taken to be an irreducible cubic
polynomial in \eqref{eq:chat}.  Their lower bound is difficult to compare with our work,
however, since they work with a different height function. 
In forthcoming work of la Bret\`eche, Browning and 
Peyre, a resolution of the Manin
conjecture is achieved for a family of Ch\^atelet surfaces that corresponds to taking
$a=-1$ and $f$ a polynomial that is totally reducible into linear factors over $\QQ$. 
%ref?

According to the investigation of Iskovskikh \cite[Proposition~1]{isk} a conic
bundle surface $X/\PP^1$ of degree $4$ arises in two possible ways.
Either the anticanonical divisor $-K_X$ is not 
ample, in which case $X$ is 
a Ch\^atelet
surface, or else $-K_X$ is ample, in which case $X$ is a non-singular
quartic del Pezzo surface.
Our proof of the theorem makes essential use of the conic bundle
structure of Ch\^atelet surfaces. 
It is inspired by an approach adopted by Salberger, as 
communicated  at the conference ``Higher-dimensional varieties and rational
points'' in Budapest in 2001, for the class of non-singular 
quartic del Pezzo surfaces with a conic bundle structure.
For such surfaces an upper bound  
$O_{\ve}(B^{1+\ve})$ is achieved for the corresponding counting
function by taking advantage of the morphism $\pi:X\rightarrow \PP^1$ in order
to count rational points of bounded height on  
the conics $\pi^{-1}(p)$, uniformly for points $p \in \PP^1(\QQ)$ of small
height.  In subsequent work 
Leung \cite{fok} has refined this argument, replacing $B^\ve$ by $(\log B)^{A}$ for a certain integer
$A\leq 5$. However, the value of $A$ is often bigger than the exponent
predicted by Manin. 
A pedestrian translation of these arguments from del Pezzo surfaces to 
Ch\^atelet surfaces would lead to a similar deficiency. 
To overcome this, we will gain significant extra leverage by restricting the
summation to only those $p\in \PP^1(\QQ)$ of small height that produce isotropic conics
$\pi^{-1}(p)$.  It seems likely that this innovation could also be put
to use in the analogous situation studied by Leung \cite{fok}.

\begin{ack}
This article addresses a question that was posed by J.-L.
Colliot-Th\'el\`ene at the meeting 
``Rational points on curves and higher-dimensional varieties'' in
Warwick in June 2008. It is a pleasure to thank 
R. de la Bret\`eche, A. Gorodnik and O. Wittenberg
for a number of useful comments, in addition to the anonymous referee
for his careful reading of the manuscript.
While working on this paper the author was supported by EPSRC
grant number \texttt{EP/E053262/1}.
\end{ack}

\section{Geometric preliminaries}

Let $F(u,v)=v^4f(\frac{u}{v})$, a binary quartic form with integer
coefficients. We denote by $X_1\subset \PP^2\times \AA^1$
the hypersurface
$$
y_1^2-az_1^2=t_1^2F(u,1), 
$$
and by  $X_2\subset \PP^2\times \AA^1$
the hypersurface
$$
y_2^2-az_2^2=t_2^2F(1,v).
$$
The Ch\^atelet surface associated with \eqref{eq:chat} is the 
geometrically integral smooth projective surface obtained by patching
together $X_1,X_2$ via the isomorphism
\begin{align*}
X_1\setminus \{u=0\} &\longrightarrow X_2\setminus \{v=0\},\\ 
([y_1,z_1,t_1];u)&\longmapsto ([y_1,z_1,u^2t_1];u^{-1}).
\end{align*}

Since $f$ has non-zero discriminant, we have a factorisation
\[ 
F(u,v)=(\be_1u-\al_1v)(\be_2u-\al_2v)(\be_3u-\al_3v)(\be_4u-\al_4v), 
\]
over $\cQ$, with $[\al_1,\be_1],\ldots,[\al_4,\be_4]\in \PP^1(\cQ)$ distinct.
The morphisms  $X_1 \rightarrow \PP^1$ and $X_2 \rightarrow \PP^1$ 
given by $([y_1 , z_1 , t_1 ]; u )\mapsto [u , 1]$ and 
$([y_2 , z_2 , t_2]; v )\mapsto [1 , v ]$, respectively, 
glue together to give a conic fibration $\pi: X \rightarrow \PP^1$. It
has four degenerate geometric fibres over the points 
$p_i = [\al_i , \be_i ] \in \PP^1(\cQ)$, for $1 \leq i\leq 4$.
The geometric fibre above $p_i$ is the subvariety of $X$ defined by $u=\al_i$
and $y_1\pm \sqrt{a}z_1=0$. This defines a union of two geometrically
integral divisors that intersect transversally and are both isomorphic
to $\PP^1$ over $\cQ$.

Let $\Pic(X)$ be the Picard group of $X$. Then 
$\Pic(X)$ is a torsion-free $\ZZ$-module with finite rank $\rho_X$, say. 
An explicit description of $\rho_X$ is given in the following result.
%It follows from this result that $\rho_X\in \{2,3,4\}$.

\begin{lemma}\label{lem:pic}
Suppose that $f=f_1\cdots f_r$ is the factorisation into irreducibles of $f$ over
$\QQ$. For each $1\leq i\leq r$ let $\QQ_{f_i}=\QQ[x]/(f_i)$ denote the 
field obtained by adjoining a root of $f_i$ to $\QQ$. Then we have
$$
\rho_X= 2+\#\{1\leq i\leq r: \sqrt{a}\in \QQ_{f_i}\}.
$$
\end{lemma}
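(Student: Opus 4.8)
The plan is to compute the Picard rank by passing to the geometric Picard group $\Pic(\bar X)$, where $\bar X = X \times_{\QQ} \cQ$, and then taking Galois invariants. Since $X$ is a smooth projective rational surface with $X(\QQ) \neq \emptyset$, the Hochschild–Serre spectral sequence gives $\Pic(X) \cong \Pic(\bar X)^{\Gal(\cQ/\QQ)}$, so that $\rho_X = \rank \Pic(\bar X)^{G}$ with $G = \Gal(\cQ/\QQ)$. Thus the computation splits into two parts: first understand $\Pic(\bar X)$ as a $G$-module, then count the dimension of its invariants.

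First I would describe $\Pic(\bar X)$ explicitly using the conic bundle structure. Over $\cQ$ the surface $\bar X$ is a conic bundle over $\PP^1$ with exactly four degenerate fibres, each splitting as a transverse union of two lines $\PP^1 \cup \PP^1$, as recorded in the geometric preliminaries above. A smooth conic bundle over $\PP^1$ with $n$ singular fibres is a rational surface whose geometric Picard group has rank $2 + n$; here $n = 4$, so $\rank \Pic(\bar X) = 6$. A natural basis is generated by the class $F$ of a smooth fibre, a section class, and one of the two components $C_i^{\pm}$ in each degenerate fibre $\pi^{-1}(p_i)$ (subject to the relation $C_i^+ + C_i^- = F$ for each $i$). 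I would then identify the Galois action on these generators.

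Next I would analyse the $G$-action. Galois acts on the base points $p_i = [\al_i,\be_i]$ by permuting them according to how $\Gal(\cQ/\QQ)$ permutes the roots of $f$; roots lying in the same irreducible factor $f_i$ form a single orbit. Separately, within each fixed fibre $\pi^{-1}(p_i)$, the two components are interchanged precisely according to the action on $y_1 \pm \sqrt a\, z_1 = 0$, i.e. according to whether $\sqrt a$ is fixed over the residue field of $p_i$. The fibre class $F$ and the combination coming from the section descend to $\QQ$, contributing the summand $2$. The remaining contribution to the invariants comes from the divisor classes supported on the degenerate fibres: for each Galois orbit of base points, I would check that an invariant class exists among the components $C_i^{\pm}$ if and only if Galois does \emph{not} swap the two components over that orbit, which happens exactly when $\sqrt a$ belongs to the residue field $\QQ_{f_i} = \QQ[x]/(f_i)$. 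Summing over the irreducible factors yields the correction term $\#\{i : \sqrt a \in \QQ_{f_i}\}$.

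The main obstacle will be the bookkeeping in the last step: one must combine the permutation action on the four points $\{p_i\}$ with the componentwise swap on each fibre, and show carefully that an orbit of base points whose field $\QQ_{f_i}$ contains $\sqrt a$ contributes exactly one independent invariant class (an appropriate sum of components over the orbit), whereas an orbit with $\sqrt a \notin \QQ_{f_i}$ contributes none beyond what is already accounted for by multiples of $F$. Handling the relations $C_i^+ + C_i^- = F$ consistently across a Galois orbit, and verifying that the candidate invariant combinations are genuinely linearly independent modulo $\ZZ F$, is the delicate point; I expect this to reduce to a clean rank computation on the sublattice of fibral divisors, with the two cases $\sqrt a \in \QQ_{f_i}$ and $\sqrt a \notin \QQ_{f_i}$ distinguished precisely by whether the swap is trivial.
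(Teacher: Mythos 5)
Your proposal is correct in outline, but it takes a genuinely different route from the paper. You compute $\Pic(\bar X)$ as a Galois module (rank $6$, spanned by the fibre class, a section, and one component of each of the four degenerate fibres) and then extract the rank of the invariants, invoking Hochschild--Serre and $X(\QQ)\neq\emptyset$ to identify $\Pic(X)$ with $\Pic(\bar X)^{G}$. The paper never leaves $\QQ$: it uses the intersection-with-a-fibre homomorphism $\Pic(X)\to\ZZ$, whose image has finite index and whose kernel is the group of vertical divisors; that kernel is free on one irreducible fibre together with one chosen component in each fibre that is reducible \emph{over} $\QQ$, and a closed fibre is reducible over $\QQ$ precisely when $\sqrt a$ lies in its residue field $\QQ_{f_i}$. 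This gives $\rho_X=1+\rank D=2+\#\{i:\sqrt a\in\QQ_{f_i}\}$ with no Galois cohomology and no hypothesis on $X(\QQ)$ (note that the cokernel of $\Pic(X)\hookrightarrow\Pic(\bar X)^G$ is in any case torsion, so your rank conclusion survives even without a rational point, but the paper sidesteps the issue entirely). Two small points of care in your version: a single geometric section need not be Galois-stable, so the ``$+1$'' transverse to the vertical lattice is better justified by noting that an invariant class of positive fibre degree exists (e.g.\ $-K_X$, of fibre degree $2$), so the degree map restricted to invariants still has finite-index image; and when $\deg f=3$ the fourth degenerate fibre sits over $[1,0]$ with residue field $\QQ$, which contributes nothing since $a$ is not a square --- this is why the count runs only over the factors of $f$. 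The signed-permutation bookkeeping you flag as delicate does work out exactly as you describe: an orbit contributes one invariant class beyond $\ZZ F$ if and only if the stabiliser fixes $\sqrt a$, i.e.\ $\sqrt a\in\QQ_{f_i}$.
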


\begin{proof}
There is a homomorphism $\Pic (X)\rightarrow \ZZ$, 
which to a divisor class in $\Pic( X)$ associates its intersection 
number with the fibre of the morphism
$\pi: X\rightarrow \PP^1$ above a closed point of $\PP^1$.  
The image of this map has finite index in $\ZZ$. 
Moreover, the kernel is generated by the
``vertical'' divisors, 
namely those which are supported in finitely many fibres of $\pi$.

We now choose an irreducible fibre of the morphism
$\pi: X\rightarrow \PP^1$. Furthermore, in each reducible fibre, we choose one of the
two components. Let $D$ be the free abelian group generated by all
of these divisors.  It plainly follows that 
$$
\rank(D)= 1+\#\{1\leq i\leq r: \sqrt{a}\in \QQ_{f_i}\},
$$
since the residue field of the closed point
corresponding to $f_i$ is just $\QQ_{f_i}$.

Finally, we note that the natural map from $D$ to $\Pic(X)$ is injective and
it identifies $D$ with the kernel of $\Pic(X)\rightarrow \ZZ$. 
Therefore we have 
$$
\rho_X=1+\rank(D),
$$
as required to complete the proof of the lemma. 
%Now all of the smooth fibres are
%linearly equivalent. Moreover, for $1\leq i\leq r$ we sort each singular fibre above
%$p_i$  according
%to whether or not it is split over the closed point generated by
%$p_i$.  Any non-split singular fibre does not
%contribute to $\Pic(X)$ since it is 
%irreducible and so linearly equivalent to a smooth fibre. 
%The split fibres can be written as the sum of two vertical divisors,
%but since the sum of the two is linearly equivalent to a smooth fibre,
%so a split singular fibre produces precisely one new class in $\Pic(X)$. 
%
%Our analysis has therefore shown 
%that $\Pic (X)$ has rank $2+n$, where $n$ is the number of split singular fibres
%above the $r$ closed points of $\Pone$ that 
%correspond to the $r$ possible irreducible factors of $f$.
%If $f_i$ is an irreducible factor of $f$, then 
%it is clear that the singular fibre above $f_i$ is split if and only
%if $\sqrt{a}$ belongs to the splitting field for $f_i$ over $\QQ$.
%This therefore concludes the proof of the lemma.
\end{proof}

\section{Proof of the theorem}

In what follows it will be convenient to use the notation $Z^m$ for
the set of primitive vectors in $\ZZ^m$. The following result
translates the problem to one involving a family of conics. 

\begin{lemma}\label{lem:torsor}
Suppose that the exponential height  $H_4$ on $\PP^4(\QQ)$ is metrized
by a norm $\|\cdot\|$ on $\RR^5$. Then we have $N(B)= \frac{1}{4}T(B)$, where
$$
T(B)=\#\Big\{(y,z,t;u,v)\in Z^3\times Z^2: 
\begin{array}{l}
\|(v^2t,uvt,u^2t,y,z)\|\leq B\\
y^2-az^2=t^2 F(u,v)
\end{array}
\Big\}.
$$
\end{lemma}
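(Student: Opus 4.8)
The plan is to pass from the morphism $\psi:X\to\PP^4$ to an explicit parametrization of $X(\QQ)$ by integer vectors, turning the counting of rational points into the counting of integer solutions of the equation in \eqref{eq:chat} subject to a height inequality. First I would unwind the definition of $N(B)$. A point of $X(\QQ)$ has image $\psi(x)\in\PP^4(\QQ)$, and by the very construction of $|-K_X|$ the five anticanonical sections restrict, in the affine chart $X_1$, to the quintuple $(v^2t,uvt,u^2t,y,z)$ up to scaling; the height is then $H(x)=H_4(\psi(x))=\|(v^2t,uvt,u^2t,y,z)\|$ evaluated at a primitive integer representative. So the content of the lemma is that lifting a projective point to a \emph{primitive} integer tuple, and recording when two such tuples give the same $x\in X(\QQ)$, contributes exactly the factor $\tfrac14$.

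Concretely, I would work on the open set $X_1$ and write a point as $([y_1,z_1,t_1];u)$ with $u=u/v\in\AA^1$, clearing denominators so that $(u,v)\in Z^2$ is primitive. The fibral coordinates $([y_1,z_1,t_1])\in\PP^2$ are then scaled to a primitive triple $(y,z,t)\in Z^3$, and the defining equation $y_1^2-az_1^2=t_1^2F(u,1)$ becomes $y^2-az^2=t^2F(u,v)$ after homogenizing with $v$; this is exactly the constraint in $T(B)$. The height condition transcribes to $\|(v^2t,uvt,u^2t,y,z)\|\le B$, using that the anticanonical embedding in the chart sends the point to this quintuple. The key bookkeeping step is to count, for each $x\in X(\QQ)$, how many tuples $(y,z,t;u,v)$ in $Z^3\times Z^2$ map to it.

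Here is where I expect the only real subtlety. Primitivity of $(u,v)$ determines $[u:v]\in\PP^1(\QQ)$ up to the sign $(u,v)\mapsto(-u,-v)$, giving a factor of $2$. Independently, primitivity of $(y,z,t)$ determines the fibral $\PP^2$-point up to its own overall sign $(y,z,t)\mapsto(-y,-z,-t)$, giving a second factor of $2$; note this sign does not interact with the $F(u,v)$ on the right-hand side, which is even under $(u,v)\mapsto(-u,-v)$, so the two sign ambiguities are genuinely independent and the equation is preserved by each. Multiplying, each $x\in X(\QQ)$ is hit by exactly $2\times2=4$ tuples, whence $T(B)=4N(B)$, i.e. $N(B)=\tfrac14T(B)$. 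The main obstacle is verifying that no \emph{further} coincidences or degeneracies occur: one must check that the patching isomorphism between $X_1$ and $X_2$ identifies the two charts' parametrizations compatibly, so that the single count on $Z^3\times Z^2$ with the homogeneous form $F(u,v)$ genuinely covers all of $X(\QQ)$ (including the fibres over $u=0$ and $v=0$) without double-counting beyond the four sign combinations, and that the primitivity normalizations leave no residual scaling freedom. Once the sign-counting and chart-compatibility are confirmed, the identity $N(B)=\tfrac14T(B)$ follows immediately.
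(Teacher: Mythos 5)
Your argument is correct and follows essentially the same route as the paper: parametrize rational points via the anticanonical map and account for the factor $4=2\times 2$ coming from the independent sign ambiguities in the primitive representatives of $(u,v)$ and $(y,z,t)$, noting that both signs preserve the equation and the height. The chart-compatibility and coverage worry you flag at the end is handled in the paper by working directly with the image $Y\subset\PP^4$ (cut out by $x_0x_2=x_1^2$ and $x_3^2-ax_4^2=Q(x_0,x_1,x_2)$) together with the exact $1:2$ correspondence $(x_0,x_1,x_2)=t(v^2,uv,u^2)$ with $\gcd(u,v)=1$ and the observation that primitivity of $\x$ amounts to primitivity of $(t,x_3,x_4)$, which treats all fibres uniformly.
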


\begin{proof}
Suppose that $f(x)=c_0x^4+\cdots +c_4$ in \eqref{eq:chat} for $c_i\in
\ZZ$. 
Consider the maps $\psi_i: X_i\rightarrow \PP^4$ given by 
\begin{align*}
&\psi_1:
([y_1,z_1,t_1];u)\longmapsto [t_1, ut_1, u^2t_1,y_1,z_1],\\
&\psi_2:
([y_2,z_2,t_2];v)\longmapsto [v^2t_2, vt_2, t_2,y_2,z_2].
\end{align*}
These induce a morphism $\psi:X\rightarrow \PP^4$ whose image 
is the del Pezzo surface 
$$
\begin{cases}
  x_0x_2=x_1^2,\\
  x_3^2-ax_4^2=c_4x_0^2+c_3x_0x_1+c_2x_0x_2+c_1x_1x_2+c_0x_2^2,
\end{cases}
$$
which we denote by $Y$. Let us write $Q(x_0,x_1,x_2)$ for the
quadratic form appearing on the right hand side of the second
equation. 

Let $H=H_4\circ \psi$, where $H_4$ is the exponential
height on $\PP^4(\QQ)$ defined by $H_4([\x])=\|\x\|$ if $\x \in Z^5$. 
Then we have
$$
N(B)=\frac{1}{2}\#\{\x \in Z^5: [\x]\in Y, ~\|\x\|\leq B\}.
$$
There is a $1:2$ correspondence between integer solutions of the
equation $x_0x_2=x_1^2$ and vectors $(t,u,v)\in\ZZ^3$ such that $u,v$
are coprime, given by 
$(x_0,x_1,x_2)=t(v^2,uv,u^2)$.
Furthermore, the primitivity of $\x$ is equivalent to the vector
$(t,x_3,x_4)$ being primitive. 
Substituting this into the second equation and 
noting that $Q(v^2,uv,u^2)=F(u,v)$, we 
therefore arrive at the statement of Lemma~\ref{lem:torsor}.
\end{proof}

In our work we are only interested in an upper bound for $N(B)$. By
equivalence of norms it will suffice to work with the norm
$\|\x\|=\max_{0\leq i \leq 4}|x_i|$ on $\RR^5$. 
Since $a$ is not a square we must have $|t|\geq 1$ 
in each solution to be counted, whence $\max\{u^2,v^2\}\leq B$. 
%The properties of $F(u,v)$ and $F(v,u)$ are 
There will be no loss of generality in fixing attention on the
contribution from $u,v$ such that $|u|\leq |v|$. 
Let $\A$ denote the set of $(u,v)\in Z^2$ for which 
$|u|\leq |v|\leq \sqrt{B}$ and $F(u,v)\neq 0$.  Then it follows from
Lemma \ref{lem:torsor} that
$$
N(B)\ll \sum_{(u,v)\in \A}
M_{u,v}(B), 
$$
where
$$
M_{u,v}(B)=
\#\Big\{(y,z,t)\in Z^3: 
\begin{array}{l}
\max\{v^2|t|,|y|,|z|\}\leq B\\
y^2-az^2=t^2 F(u,v)
\end{array}
\Big\}.
$$
We would now like to thin down the outer summation
by restricting attention to those $(u,v)\in \A$ for which the conic 
$y^2-az^2=t^2 F(u,v)$ has a non-trivial rational point.

For our purposes it will suffice to restrict attention to those $(u,v)\in \A$
for which the Legendre symbol $(\frac{a}{p})$ is distinct from $-1$ for 
each odd prime $p$ such that $p\| F(u,v)$.
Here we write $p\| n$ for $n \in \ZZ$ if $p\mid n$ but $p^{2}\nmid n$.
To see that this is satisfactory one merely notes that if 
$p\| F(u,v)$ then the equation for the conic implies that 
$y^2\equiv a z^2 \bmod{p}$ and $p\nmid
\gcd(y,z)$, since $\gcd(y,z,t)=1$ in each solution counted.

Define the arithmetic function
\begin{equation}
  \label{eq:phi}
\vartheta(n)=\prod_{p\| n} 2^{-1}\Big(1+\Big(\frac{a}{p}\Big)\Big),
\end{equation}
where we have extended the Legendre symbol to all primes by setting
$(\frac{a}{2})=0$. 
The function $\vartheta$ is multiplicative, non-negative  and satisfies 
$$
\vartheta(p^\ell)=\begin{cases}
\frac{1}{2}, & \mbox{if $\ell=1$ and $p\mid 2a$,}\\
0, & \mbox{if $\ell=1$ and $(\frac{a}{p})=-1$,}\\
1, & \mbox{otherwise,}
\end{cases}
$$
for any prime power $p^\ell$.
We will use $\vartheta$ as a characteristic function to  weed out
values of $(u,v)\in \A$ that produce anisotropic 
conics. In this way we obtain
$$
N(B)\ll \sum_{(u,v)\in \A}\vartheta(|F(u,v)|)M_{u,v}(B).
$$

The task of estimating $M_{u,v}(B)$ boils down to counting rational points
on a geometrically integral plane conic, with the points constrained to lie in a
lop-sided region. For this we can take advantage 
of work of Browning and Heath-Brown \cite[Corollary~2]{bhb}, a
key feature of which being its uniformity with respect to the
height of the conic. Since $a<0$ it follows
that we may replace the
height restrictions on $y,z,t$ in $M_{u,v}(B)$ by
$$
y,z\ll \frac{B|F(u,v)|^{\frac{1}{2}}}{v^2}, \quad |t|\leq \frac{B}{v^2},
$$
as follows from the equation for the conic. 
Our conic is defined by a ternary quadratic form $\x^T \mathbf{M}\x$,
where $\x=(y,z,t)$ and $\mathbf{M}=\Diag(1,-a,-F(u,v))$.
In particular the greatest common divisor of the $2\times 2$ minors of
$\mathbf{M}$ is $O(1)$ and its determinant is $aF(u,v)$. 
The inequalities satisfied by $y,z,t$ above define a box in $\RR^3$
with volume $O(v^{-6}B^3|F(u,v)|)$. It now follows from 
\cite[Corollary~2]{bhb} that 
$$
M_{u,v}(B)\ll 2^{\omega(F(u,v))}\Big(1+\frac{B}{v^2}\Big)\ll 
B\frac{2^{\omega(F(u,v))}}{v^2},
$$ 
since $|v|\leq \sqrt{B}$.
Note that we have replaced the divisor function by the function 
$2^{\omega(\cdot)}$ in our application of this result, where $\omega(n)$ denotes the
number of distinct prime divisors of $n$. An inspection of the proof
reveals that is indeed permissible. 

Let $\varpi(n)=2^{\omega(n)}\vartheta(n)$, where $\vartheta$ is given by
\eqref{eq:phi}. Then our analysis so far has shown that 
\begin{equation}
  \label{eq:return}
N(B)\ll B\sum_{(u,v)\in \A}\frac{\varpi(|F(u,v)|)}{v^2}.
\end{equation}
In view of the trivial bound $\varpi(n)=O_\ve(n^{\ve})$ for any
$\ve>0$, it would be easy to conclude at this point that
$N(B)=O_{\ve}(B^{1+\ve})$. To get the correct power of $\log B$
emerging we must work somewhat harder.
For given $U,V\geq 1$ it will be convenient to introduce the sum
$$
S(U,V)=\sum_{|u|\leq U}\sum_{|v|\leq V} \varpi(|F(u,v)|).
$$
The estimation of $S(U,V)$ is the subject of the following result,
whose proof we will defer to the next section.

\begin{lemma}\label{lem:tech}
Let $V\geq U\geq 1$. Then for all $\ve>0$ we have 
$$
S(U,V)\ll_{\ve} UV (\log V)^{\rho_X-2}+V^{1+\ve}.
$$
\end{lemma}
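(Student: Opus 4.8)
The plan is to treat $S(U,V)$ as a sum of the multiplicative function $\varpi$ over the values of the binary quartic $F$, and to extract the correct power of $\log V$ by exploiting the cancellation that is hidden in the Legendre symbols defining $\vartheta$. It is worth stressing at the outset why this cancellation is indispensable. Writing $n_F(p)=\#\{[u,v]\in\PP^1(\mathbb{F}_p):F(u,v)\equiv 0\}$, a crude bound of $\varpi$ by $2^{\omega(\cdot)}$ followed by an off-the-shelf Nair--Tenenbaum-type estimate would produce a power of $\log V$ governed by the average of $\varpi(p)\,n_F(p)$. Since $\varpi(p)=1+\Big(\frac{a}{p}\Big)\ge 0$ for every $p$, this average exceeds the target exponent $\rho_X-2$ by a quantity comparable to the number $r$ of irreducible factors of $f$. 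That surplus is exactly the ``pedestrian'' loss alluded to in the introduction, and removing it forces one to retain the sign of the character throughout.

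To capture this cancellation I would factor out a principal part through the Dirichlet convolution $\varpi=1*g$, where $g$ is the multiplicative function supported on cubefree integers determined by $g(p)=\varpi(p)-1=\Big(\frac{a}{p}\Big)$ for all $p$ and $g(p^2)=\varpi(p^2)-\varpi(p)$, with $|g(d)|\le 2^{\omega(d)}$ throughout. Inserting this and interchanging the order of summation gives $S(U,V)=\sum_d g(d)\,N_d(U,V)$, where $N_d(U,V)$ counts the pairs $(u,v)$ with $|u|\le U$, $|v|\le V$ and $d\mid F(u,v)$. A lattice-point count, using that the solutions of $F(u,v)\equiv 0\pmod d$ lie on at most $\deg F$ sublattices modulo each prime, yields $N_d(U,V)=\tfrac{4UV\varrho(d)}{d^2}+E_d$, where $\varrho(d)=\#\{(u,v)\bmod d:\ d\mid F(u,v)\}$ and $E_d$ is a boundary error.

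The resulting main term is $4UV\sum_{d\le D}\tfrac{g(d)\varrho(d)}{d^2}$ for a suitable truncation $D$, and the crux is that, because $g$ now carries the quadratic character, this \emph{signed} sum is only of size $(\log V)^{\rho_X-2}$. To see this I would study the Dirichlet series $\sum_d g(d)\varrho(d)\,d^{-1-s}$ near $s=1$: since $\varrho(p)=n_F(p)(p-1)+O(1)$ and $g(p)=\Big(\frac{a}{p}\Big)$, its logarithm equals, up to a term holomorphic at $s=1$, the quantity $\sum_i\sum_p \Big(\frac{a}{p}\Big)\,n_{f_i}(p)\,p^{-s}$, where $n_{f_i}(p)$ is the number of roots of $f_i$ modulo $p$. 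Applying Frobenius reciprocity in the Galois closure of $\QQ_{f_i}$, the permutation character counting these roots pairs with the quadratic character of $\QQ(\sqrt{a})$ to give $1$ exactly when $\sqrt{a}\in\QQ_{f_i}$ and $0$ otherwise; equivalently $\sum_{p\le V}\Big(\frac{a}{p}\Big)n_{f_i}(p)/p\sim\log\log V$ in the former case and is bounded in the latter. Thus the associated $L$-function factor contributes a simple pole at $s=1$ precisely for those $i$ with $\sqrt{a}\in\QQ_{f_i}$, so the total pole order is $\#\{i:\sqrt{a}\in\QQ_{f_i}\}=\rho_X-2$ by Lemma~\ref{lem:pic}. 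A Selberg--Delange or Landau-type estimate then bounds the truncated sum by $O((\log V)^{\rho_X-2})$, whence the main term is $O(UV(\log V)^{\rho_X-2})$.

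The step I expect to be the real obstacle is controlling the error $\sum_d g(d)E_d$, which must be kept to $O(V^{1+\ve})$ in order to furnish the secondary term of the lemma. Unlike the main term, the boundary errors $E_d$ enjoy no sign cancellation, so one cannot simply push the truncation $D$ all the way up to $V$. I would instead split the range of $d$: for small $d$ the count $N_d$ can be rendered genuinely accurate by geometry of numbers on the relevant sublattices, whereas for large $d$ one exploits that $|F(u,v)|\ll V^4$, so that a value of $F$ can possess only boundedly many prime factors exceeding a fixed power of $V$, which lets the tail be controlled. The degenerate contributions---pairs with $F(u,v)=0$, which lie on the finitely many rational lines $\be_i u=\al_i v$, together with the effect of the constraint $|u|\le|v|$ near the boundary---are each $O(V^{1+\ve})$ and may be absorbed into this term. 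Balancing the truncation against these errors is where the technical work lies, but it is a technical rather than a conceptual difficulty: the conceptual heart of the argument is entirely the character-sum evaluation of the preceding step.
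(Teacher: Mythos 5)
Your treatment of the singular series is sound, and it is genuinely different from the paper's at the one point where the paper does real algebraic number theory. You evaluate $\sum_{p\le x}\big(\tfrac{a}{p}\big)n_{f_i}(p)/p$ by pairing the permutation character $\mathrm{Ind}_{H_i}^{G}1$ with the quadratic character cutting out $\QQ(\sqrt{a})$, via Frobenius reciprocity and Chebotarev. The paper instead introduces the Hecke character $\chi(\fp)=\big(\tfrac{a}{N(\fp)}\big)$ on $\QQ_{f_i}$, uses Dedekind's theorem to identify $\rho_{f_i}(p)$ with the number of degree-one primes above $p$, and characterises when $\chi$ is principal by Bauer's theorem. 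Both routes prove the same dichotomy --- the partial Euler product is $\asymp\log U$ or $O(1)$ according as $\sqrt{a}\in\QQ_{f_i}$ or not --- and yours is arguably the more transparent.

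The gap is in the first half, the reduction of $S(U,V)$ to that Euler product. The paper runs no convolution argument at all: it applies \cite[Corollary 1]{nair} as a black box to the \emph{non-negative} multiplicative function $\varpi$, which immediately gives $S(U,V)\ll_\ve UVE_f(U)+V^{1+\ve}$ with $E_f(U)=\prod_{p\le U}\big(1+\rho_f(p)\big(\tfrac{a}{p}\big)/p\big)$. The character is not recovered by signed cancellation in a divisor sum; it is already present in the local densities, because $\varpi(p)=1+\big(\tfrac{a}{p}\big)$ vanishes for half the primes. (Your opening paragraph slightly misdiagnoses the ``pedestrian loss'': it arises only if one first majorises $\varpi$ by $2^{\omega}$, not from using a non-negative-function estimate as such.) Your proposed substitute --- writing $\varpi=1*g$ with $g(p)=\big(\tfrac{a}{p}\big)$ and summing $g(d)N_d(U,V)$ over $d$ --- founders exactly where you say it does. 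The moduli run up to $|F(u,v)|\ll V^4$; the asymptotic $N_d=4UV\varrho(d)/d^2+E_d$ is useless once $d$ exceeds roughly $U$; and for signed $g$ the tail $\sum_{d>D}g(d)N_d$ cannot be handled by the positivity devices available for non-negative weights, since bounding it by absolute values reintroduces a quantity of size $UV$ times a power of $\log V$ that in general exceeds $\rho_X-2$. Closing this requires the full Nair--Tenenbaum/la Bret\`eche--Browning apparatus (splitting each value of $F$ into a friable part and a part with boundedly many large prime factors), i.e.\ it amounts to reproving the cited result; the sentence asserting that the tail ``lets itself be controlled'' is not a proof. Either invoke \cite[Corollary 1]{nair} as the paper does, applied directly to $\varpi$, or supply that argument in full.
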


We now have everything in place to complete the proof of the
theorem. Returning to \eqref{eq:return} we see that there is an
overall contribution of $O(B)$ from those $(u,v)\in\A$ with $u=0$. 
Breaking the summation of the remaining 
$u,v$ into dyadic intervals we therefore find
\begin{align*}
N(B)&\ll B+B
\sum_{\substack{i,j\in \ZZ\\
-1\leq i<j\leq \frac{1}{2\log 2}\log B}}
\sum_{2^i<|u|\leq 2^{i+1}}
\sum_{2^j<|v|\leq 2^{j+1}}
\frac{\varpi(|F(u,v)|)}{v^2}\\
&\ll B+B
\sum_{\substack{i,j\in \ZZ\\
-1\leq i<j\leq \frac{1}{2\log 2}\log B}}
\frac{S(2^{i+1},2^{j+1})}{2^{2j}}.
\end{align*}
Here we have 
dropped the conditions that $F(u,v)\neq 0$ and $\gcd(u,v)=1$, as
permitted by the fact that the summand is non-negative. 
Applying Lemma~\ref{lem:tech} we therefore deduce that
\begin{align*}
N(B)\ll B+B(\log B)^{\rho_X-2}
\sum_{\substack{i,j\in \ZZ\\
-1\leq i<j\leq \frac{1}{2\log 2}\log B}}
\frac{2^{i}}{2^{j}}\ll B(\log B)^{\rho_X-1},
\end{align*}
as required to complete the proof
of the theorem.

\section{Proof of Lemma \ref{lem:tech}}

Determining the average order of arithmetic functions
as they range over the values of polynomials has a substantial
pedigree in analytic number theory. For the proof of Lemma
\ref{lem:tech} we will need to analyse the average order of the
arithmetic function
$$
\varpi(n)=2^{\omega(n)} \prod_{p\| n}
2^{-1}\Big(1+\Big(\frac{a}{p}\Big)\Big),
$$
as it ranges over the values of the binary quartic form $F$.

The key technical tool for this argument 
is supplied by  work of la Bret\`eche and Browning \cite{nair}. We
recall that $F$ has non-zero discriminant and observe that $\varpi$ is a non-negative
multiplicative arithmetic function satisfying the estimates
$\varpi(n)=O_\ve(n^\ve)$ and $\varpi(p^\ell)\leq 2$ for $n\in \NN$ and
prime powers $p^\ell$.
In view of the fact that $\varpi(p)=1+(\frac{a}{p})$, we may therefore
conclude from \cite[Corollary 1]{nair} that 
$$
S(U,V) \ll_{\ve}
UVE_f(U) +V^{1+\ve},
$$
for any $\ve>0$, where 
$$
E_f(U)=
\prod_{1\ll p\leq U}\Big(1+\frac{\rho_f(p)(\frac{a}{p})}{p}\Big).
$$
Here $f(x)=F(x,1)$ is the polynomial appearing in
\eqref{eq:chat} and $\rho_{f}(m)$ is 
the number of solutions to the congruence 
$f(x)\equiv 0 \bmod{m}$ in $\ZZ/m\ZZ$.

Suppose that $f=f_1\cdots f_r$ is the factorisation into irreducibles of $f$ over
$\QQ$, with 
$\sum_{i=1}^r\deg f_i\in \{3,4\}$. Then we have 
$$
E_f(U)\ll E_{f_1}(U)\cdots E_{f_r}(U).
$$
Our attention now shifts to estimating $E_f(U)$ for any $U\geq 2$ and
any irreducible polynomial $f \in \ZZ[x]$ of degree $d$ with non-zero discriminant.
We will show that
\begin{equation}
  \label{eq:show}
  E_f(U)\ll \begin{cases}
1, & \mbox{if $\sqrt{a}\not \in \QQ_{f}$,}\\
\log U, & \mbox{if $\sqrt{a}\in \QQ_{f}$,}
\end{cases}
\end{equation}
where 
$\QQ_{f}=\QQ[x]/(f)$ denotes the 
field obtained by adjoining a root of $f$ to $\QQ$.
In view of Lemma \ref{lem:pic} this will 
suffice for the statement of Lemma \ref{lem:tech}.

In order to understand the asymptotic behaviour of $E_f(U)$ we must
investigate the analytic properties of the $L$-function
$$
H(s)=\prod_p\Big(1+ \frac{\rho_f(p)(\frac{a}{p})}{p^s}\Big), \quad (\Re(s)>1).
$$
We will do so by relating $H(s)$ to a certain Hecke $L$-function and
analysing it in the neighbourhood of $s=1$.  The
necessary facts are classical and can be found in the work of Heilbronn
\cite{heilbronn} and Neukirch \cite[Chapter VII]{neukirch}.

Let $K=\QQ_f$ and 
and let $d_K$ denote its discriminant.
We write $\fa=(2ad_K)$ for the ideal generated by $2ad_K$ in $\ring$. 
Furthermore, let $N(\fn)=|\ring/\fn|$ denote the norm of any ideal $\fn$ in
$\ring$.
For a prime ideal $\fp$ in $\ring$ which is coprime to $\fa$ we define 
$$
\chi(\fp)=\Big(\frac{a}{N(\fp)}  \Big)=\Big(\frac{a}{p}  \Big)^\ell,
$$
if $N(\fp)=p^\ell$, where $(\frac{a}{p})$ is the ordinary Legendre
symbol. One extends $\chi$ to all fractional ideals coprime to $\fa$
by multiplicativity. 
Then $\chi$ is a group homomorphism from the ray class group
 $J^{\fa}/P^{\fa}$ to $\{\pm 1\}$. Here $J^{\fa}$ is the group of ideals
coprime to $\fa$ and $P^{\fa}$ is the subgroup of fractional principal
 ideals $(\alpha)$ for which $\al \equiv 1 \bmod{\fa}$ and
 $\sigma(\al)>0$ for every real embedding $\sigma:K\rightarrow
 \RR$. Thus $\chi$ is a generalised Dirichlet character modulo $\fa$. 
A principal character modulo $\fa$ is any character $\chi_0$ such
that $\chi_0(\fn)=1$ for all $\fn\in J^{\fa}$.
Finally, we extend $\chi$ to all integral ideals by setting $\chi(\fn)=0$ if
$\fn$ has a factor in common with $\fa$.

The Hecke $L$-function associated to the number field $K$ and the
quadratic character $\chi$ is defined to be
$$
L_K(s,\chi)=
\sum_{\fn}\frac{\chi(\fn)}{N(\fn)^s}=
\sum_{n=1}^\infty \frac{b(n)}{n^s}, \quad
(\Re(s)>1),
$$
where the first sum is over integral ideals and 
$b(n)=\sum_{N(\fn)=n}\chi(\fn)$. 
For a rational prime $p$ one notes
that 
$$
b(p)=\sum_{N(\fp)=p}\chi(\fp)=\Big(\frac{a}{p}\Big)\#\{\fp : N(\fp)=p\}.
$$
Now for $p\nmid d_K$ there is a well-known principle due to
Dedekind \cite[p. 212]{dedekind} 
which ensures that $\rho_f(p)=\#\{\fp : N(\fp)=p\}$.
A modern account of this fact can be found in the work of Narkiewicz
\cite[\S 4.3]{nark}.
Employing a standard calculation based on partial summation and the prime ideal
theorem, we therefore conclude that 
$$
E_f(U)\ll \exp\Big(\sum_{N(\fp)\leq U}\frac{\chi(\fp)}{N(\fp)}\Big)
\ll
\begin{cases}
1, & \mbox{if $\chi\neq \chi_0$,}\\
\log U, & \mbox{if $\chi=\chi_0$.}
\end{cases}
$$

In order to complete the proof of \eqref{eq:show} it therefore remains
to show that $\chi$ is principal if and only if $\sqrt{a}\in K$. 
For any finite extension $N/M$ of number fields, let $P(N/M)$ denote
the set of all unramified prime ideals of $M$ which admit in $N$ a
prime divisor of residue class degree $1$ over $M$. 

Suppose first that $\sqrt{a}\in K$ and write $J=\QQ(\sqrt{a})$. 
Then we have a tower of separable extensions $\QQ\subseteq J \subseteq K$.
If $\chi$ is not principal then there is a prime ideal $\fp$ above a
rational prime $p\nmid 2ad_K$ with odd residue class degree, such that
$(\frac{a}{p})=-1$.  In particular $p$ is inert in $J$, with residue
class degree $2$.  But then the transitivity of norms implies that the
residue class degree of $\fp$ is even, which is a 
contradiction.

We now argue in the reverse direction, taking for our hypothesis the
assumption that $\chi$ is principal. This is equivalent to
$\chi(\fp)=1$ for all prime ideals $\fp$ coprime to $\mathfrak{a}.$
Any unramified rational prime $p$ factorises as 
$(p)=\fp_1\cdots \fp_r$ with distinct primes $\fp_i$ such that 
$N(\fp_i)=p^{\ell_i}$  and $\sum_{i=1}^r\ell_i=[K:\QQ].$
It follows that $(\frac{a}{p})=1$  for any prime $p\nmid 2ad_K$ such
that $(p) \in P(K/\QQ)$. But then any such $p$ splits
completely in  $J=\QQ(\sqrt{a})$ since $(\frac{a}{p})=1$, whence $(p)\in P(J/\QQ)$. 
We have therefore shown that $
P(K/\QQ)$ is contained in $P(J/\QQ)$, up to finitely many exceptional
elements.  It now follows from Bauer's theorem \cite[\S VII.13]{neukirch} 
that $J\subseteq K$, so that $\sqrt{a}\in K$.

\end{document}